\theoremstyle{plain}
\newtheorem{theorem}{Theorem}[section]
\newtheorem{proposition}[theorem]{Proposition}
\newtheorem{conjecture}[theorem]{Conjecture}
\theoremstyle{break}
\newtheorem{remark}[theorem]{Remark}
\newtheorem{definition}[theorem]{Definition}
\newtheorem{question}[theorem]{Question}
\newtheorem{exam}[theorem]{Example}
\newtheorem{proof}{Proof}
\newcommand{\myemail}[1]{\indent \emph{E-mail:} {\tt #1}}
\newcommand{\myaddress}[1]{\indent {\sc #1}\par}
\newcommand{\fbar}{\overline{f}}
\newcommand{\bY}{{\mathbf Y}}
\newcommand{\bX}{{\mathbf X}}
\newcommand{\bZ}{{\mathbf Z}}
\newcommand{\bE}{{\mathbf E}}
\newcommand{\ix}{\mathcal {X}}
\newcommand{\iu}{\mathcal {U}}
\newcommand{\iz}{\mathcal {Z}}
\newcommand{\iy}{\mathcal {Y}}
\newcommand{\ie}{\mathcal {E}}
\newcommand{\ic}{\mathcal {C}}
\newcommand{\is}{\mathcal {S}}
\newcommand{\Q}{\mathbb{Q}}
\newcommand{\A}{\mathbb{A}}
\newcommand{\G}{\mathbb{G}}
\newcommand{\Z}{\mathbb{Z}}
\newcommand{\Pro}{\mathbb{P}}
\newcommand{\N}{\mathcal{N}}
\DeclareMathOperator{\Spec}{Spec}
\DeclareMathOperator{\Proj}{Proj}
\title{Strong cycles and intersection products on good moduli spaces}
\author{Dan Edidin\footnote{Research supported in part by
Simons Collaboration Fellowship 315460.}\; and Matthew Satriano\footnote{Research supported in part by NSERC grant RGPIN-2015-05631.}}
\date{\today}
\begin{document}
\maketitle
\begin{abstract}
  We introduce conjectures relating the Chow ring of a smooth Artin
  stack $\ix$ to the Chow groups of its possibly singular good moduli
  space $\bX$. In particular, we conjecture the existence of an intersection
  product on a subgroup of the full Chow group $A^*(\bX)$
  coming from {\em strong cycles} on $\ix$.
\end{abstract}
\section{Introduction}
Let $X$ be a non-singular projective variety with an action of a linearly reductive group $G$ over an algebraically closed field of characteristic 0. 
Given a linearization $L$ of the action we can define the open set,
$X^s$, of $L$-stable points and the open set, $X^{ss}$, of $L$-semistable
points.  Mumford's GIT produces quotients
$X^s/G$ and $X^{ss}/G$ of these open sets. 
The former quotient has mild (finite quotient)
singularities but is not in general proper. The latter quotient is
projective and  contains
$X^s/G$ as an open set, but in general has worse singularities.

When the action of $G$ has generically finite stabilizers the GIT
quotient $X^{s}/G$ is the {\em coarse moduli space} of the
Deligne-Mumford (DM) quotient stack $[X^s/G]$. Likewise, the quotient
$X^{ss}$ is the {\em good moduli space} in the sense \cite{Alp:13} of
the Artin quotient stack $[X^{ss}/G]$.

There is a fully developed intersection theory on quotient stacks \cite{EdGr:98} which assigns to any smooth quotient stack $\ix$ a Chow ring $A^*(\ix)$. When $\ix$ is smooth Deligne-Mumford, there is a beautiful relationship between the Chow ring of $\ix$ and that of its coarse moduli space $\bX$: there is a pushforward isomorphism on rational Chow groups $A^*(\ix)_\Q \stackrel{\simeq}{\longrightarrow} A^*(\bX)_\Q$. As a result, the possibly singular variety (or more generally algebraic space) $\bX$ has an intersection product on its rational Chow groups induced from the intersection product on the Chow groups of $\ix$. Furthermore, a fundamental result of Vistoli \cite{Vis:89} states that any variety $\bX$ with finite quotient singularities is the coarse moduli space of a smooth Deligne-Mumford stack $\ix$, and hence by the above, $A^*(\bX)_\Q$ carries an intersection product coming from that of $A^*(\ix)_\Q$.

However, for varieties $\bX$ with worse than finite quotient
singularities, or for stacks $\ix$ which are not Deligne-Mumford, the
beautiful picture above breaks down in almost every aspect. In
general, if $\ix$ is a smooth Artin stack, the rational Chow groups
$A^*(\ix)_\Q$ can be non-zero in arbitrarily high degree, so cannot be
isomorphic to the rational Chow groups $A^*(\bX)_\Q$ of the good
moduli space. In fact, it is not even known if the moduli map
$\pi\colon\ix\to\bX$ induces a pushforward map $\pi_*\colon
A^*(\ix)_\Q\to A^*(\bX)_\Q$. Moreover, there are good quotients by actions
of reductive groups, such as the cone over a quadric hypersurface,
where one can prove there is no reasonable intersection product on the
Chow groups.

In this article we consider two questions about the Chow groups of Artin stacks and their good moduli spaces aimed at rectifying the above problems.

\begin{question} \label{ques.pushforward}
Let $\ix$ be an Artin stack with good moduli space morphism $\pi \colon \ix \to \bX$. Is there a geometrically meaningful pushforward map $\pi_* \colon A_*(\ix)_\Q \to A_*(\bX)_\Q$?
\end{question}

Since this pushforward map, if it exists, cannot be an isomorphism we are led to the following:

\begin{question} \label{ques.subring} Is there an interesting subring
  of the Chow ring $A^*(\ix)$ such that the restriction of $\pi_*$ to
  this subring is injective?
\end{question}

\begin{remark}[{Application to Chow groups of singular varieties}]
  As mentioned above, there exist singular varieties $\bX$ that are
  good quotients by reductive groups for which $A^*(\bX)_\Q$ carries
  no reasonable intersection product, e.g.~the cone over a quadric
  hypersurface. However, if Questions \ref{ques.pushforward} and
  \ref{ques.subring} have affirmative answers, then we can identify an
  interesting \emph{subgroup} of $A^*(\bX)_\Q$ that \emph{does} carry
  an intersection product. Indeed, such a variety $\bX$ is a good
  moduli space of a smooth Artin stack $\ix$; by Question
  \ref{ques.pushforward}, we have a map $\pi_* \colon A^*(\ix)_\Q \to
  A^*(\bX)_\Q$ and our desired subgroup of $A^*(\bX)_\Q$ is the image
  of the subring provided by Question \ref{ques.subring}.
\end{remark}

We answer Question \ref{ques.pushforward} for the class of good moduli
space morphisms that look \'etale locally like GIT quotients with
non-empty stable loci; this is done in Section
\ref{sec:Reichstein}. We then give a conjectural answer to Question
\ref{ques.subring} in Section \ref{sec:strong-Chow}. We shall see that
the answers to both questions are related by the concept of {\em
  strong embedding} which was the subject of the first author's TIFR
Colloquium lecture.

\vspace{1em}

\noindent{\bf Acknowledgments} It is a pleasure to thank the organizers of the TIFR International Colloquium on $K$-theory for a wonderful conference and a stimulating environment.

\subsection{Background on stacks and good moduli spaces}
For simplicity of exposition, all stacks are assumed to be defined
over an algebraically closed field of characteristic 0. We also assume
that any stack is of finite type over the ground field and has affine
diagonal.  The following definitions are generalizations of concepts
in invariant theory.

\begin{definition}\cite[Definition 4.1]{Alp:13}
Let $\ix$ be an algebraic stack and let $\bX$ be an algebraic space. We say
that $\bX$ is a {\em good moduli space of $\ix$} if there is a morphism
$\pi \colon \ix \to \bX$ such that
\begin{enumerate}
\item $\pi$ is {\em cohomologically affine} meaning that the pushforward functor $\pi_*$
on the category of quasi-coherent ${\mathcal O}_\ix$-modules is exact.

\item $\pi$ is {\em Stein} meaning that the natural map ${\mathcal O}_\bX \to \pi_* {\mathcal O}_\ix$ is an isomorphism.
\end{enumerate}
\end{definition}
\begin{remark} By \cite[Theorem 6.6]{Alp:13}, a good moduli space morphism
$\pi\colon\ix\to\bX$ is the universal morphism from $\ix$ to an algebraic space. That is, if $\bZ$ is an algebraic space then any morphism $\ix \to \bZ$ factors through a morphism $\bX \to \bZ$. Consequently $\bX$ is unique up to unique isomorphism, so we will refer to $\bX$ as \emph{the} good moduli space of $\ix$.
\end{remark}

\begin{remark}
If $\ix = [X/G]$ where $G$ is a linearly reductive algebraic group
then the statement that $\bX$ is a good moduli space for $\ix$ is 
equivalent to the
statement that $\bX$ is the good quotient of $X$ by $G$. 
\end{remark}

\begin{definition} \label{def.stablegms}  \cite{EdRy:16}
Let $\ix$ be an Artin with good moduli space $\bX$ and
let $\pi \colon \ix \to \bX$ be the good moduli space morphism. We say that a closed point
of  $\ix$ is
{\em
   stable} if $\pi^{-1}(\pi(x)) = x$ under the induced map of
  topological space $|\ix| \to |\bX|$. A point $x$ of $\ix$ is {\em
    properly stable} if it is stable and the stabilizer of $x$ is finite.

We say $\ix$ is  stable (resp.~properly stable) if there is a good moduli
space $\ix \stackrel{\pi} \to \bX$ and the 
the set of stable (resp.~properly stable) points is non-empty.
\end{definition}
\begin{remark}
This definition is modeled on GIT. If $G$ is a linearly reductive group
and $X^{ss}$ is the set of semistable points for a linearization of the 
action of $G$ on a projective variety $X$ then a (properly) stable point
of $[X^{ss}/G]$ corresponds to a (properly) stable orbit in the sense of GIT. 
The stack $[X^{ss}/G]$ is stable if and only if $X^{s} \neq \emptyset$. Likewise
$[X^{ss}/G]$ is properly stable if and only if $X^{ps} \neq \emptyset$. As is the case for GIT quotients, the set of stable (resp. properly stable points)
is open \cite{EdRy:16}.

We denote by $\ix^s$ (resp.~$\ix^{ps}$) the open substack of $\ix$ of
stable (resp.~properly stable) points. The stack $\ix^{ps}$ is the
maximal Deligne-Mumford substack of $\ix$ which is saturated with
respect to the good moduli space morphism $\pi\colon\ix \to X$.  In
particular, a stack $\ix$ with good moduli space $\bX$ is properly
stable if and only if it contains a non-empty {\em saturated}
Deligne-Mumford open substack. 
\end{remark}
\begin{exam}
  Consider the action of $\G_m$ on $\A^2 = \Spec k[x,y]$ with
  weights $(1,0)$. The quotient stack $\ix = [\A^2/\G_m]$ is one
  dimensional and has one-dimensional good moduli space $\A^1 = \Spec
  k[y]$. The open set $\A^2 \smallsetminus V(x)$, is the maximal open
  set on which $\G_m$ acts with finite (in fact trivial)
  stabilizers. Hence $\ix$ has a maximal open DM substack (which in this
  case is a scheme) $\iu = [(\A^2 \smallsetminus V(x)) /\G_m] =
  \A^1$. However, this open substack is not saturated with respect to
  the good moduli space morphism $\ix \to \A^1$. Indeed, this action
  of $\G_m$ on $\A^2$ has no stable or properly stable points and
  $\ix$ is not a stable stack.
\end{exam}

\subsection{Background on equivariant
Chow groups and Chow groups of stacks}

\subsubsection{Equivariant Chow groups}
If $X$ is an equidimensional scheme or algebraic space, we use the
notation $A^k(X)$ to denote the Chow group of codimension-$k$ cycles
modulo rational equivalence. The total Chow group $A^*(X)$ is the
direct sum $\oplus_{k=0}^{\dim X} A^k(X)$. When $X$ is smooth the
intersection product makes $A^*(X)$ into a graded ring.

The definition of equivariant Chow groups is
modeled on the Borel construction in equivariant cohomology. If a linear
algebraic group $G$ acts on $X$ then the $k$-th equivariant Chow group
$A^k(X)$ is defined to be $A^k(X_G)$ where  where $X_G$ is any quotient of the form $(X \times
U)/G$ where $U$ is an open set in a representation ${\mathbf V}$ of
$G$ such that $G$ acts freely on $U$ and ${\mathbf V} \smallsetminus
U$ has codimension more than $i$.  In \cite{EdGr:98} it is shown that
such pairs $(U, {\mathbf V})$ exist for any algebraic group and that
the definition of $A^k_G(X)$ is independent of the choice of $U$
and ${\mathbf V}$.

Note that, since representations can have arbitrarily high dimension,
$A^k_G(X)$ can be non-zero in arbitrarily high degree. Thus the total
equivariant Chow group $A^*_G(X)$ is the {\em infinite} direct sum
$\oplus_{k=0}^\infty A^k_G(X)$. An equivariant $k$-cycle need not be supported on $X$, but only on $X \times {\mathbf V}$ where ${\mathbf V}$ is a representation of $G$.

Because equivariant Chow groups are defined as Chow groups of schemes (or more generally algebraic spaces)
they enjoy all of the functoriality of ordinary Chow groups. 
In particular, if $X$ is smooth then pullback along the diagonal
defines an intersection product on the total Chow group
$A^*_G(X)$.

\subsubsection{Chow groups of stacks}

Gillet \cite{Gil:84} and Vistoli \cite{Vis:89} defined Chow groups of Deligne-Mumford stacks in an analogous way to Chow groups of schemes. Namely, they considered the group generated by the classes of integral closed substacks modulo rational equivalences.  With rational coefficients this theory has many desired properties such as an intersection product on the rational Chow groups of smooth stacks. Kresch \cite{Kre:99} generalized their work by defining integral Chow groups for Artin stacks $\ix$ with quasi-affine diagonal. When $\ix=[X/G]$ is a quotient stack, Kresch's Chow groups $A^*(\ix)$ agree with the equivariant Chow groups $A^*_G(X)$.


When $\ix$ is Deligne-Mumford with coarse space $\bX$ the proper
pushforward $\pi_* \colon A^*(\ix)_\Q \to A^*(\bX)_\Q$ is an
isomorphism \cite{Vis:89,EdGr:98}. The pushforward is defined on cycles
by the formula
$[\iz] \mapsto e_{\iz}^{-1}[\pi(\iz)]$ where $e_{\iz}$ is the generic 
order of the stabilizer group along $\iz$. In particular this means that if
$\ix = [X/G]$ is a Deligne-Mumford stack then every equivariant Chow
class can be represented by a $G$-invariant cycle on $X$ (as opposed
to $X \times {\mathbf V}$ where ${\mathbf V}$ is a representation of
$G$).  Consequently $A^k(\ix)_\Q = 0$ for $k > \dim \ix$.

If $\ix$ is not Deligne-Mumford then $A^k(\ix)_\Q$ will be non-zero in
arbitrarily high degree, so if $\pi \colon \ix \to \bX$ is a good moduli
space morphism we cannot expect $A^*(\ix)_\Q$ to equal
$A^*(\bX)_\Q$.

\subsection{Strong lci morphisms of stacks with good moduli spaces}

We now introduce the key concepts of strong embeddings and strong lci morphisms.

\begin{definition}
  Let $\ix$ be an Artin stack with good moduli space 
$\pi \colon \ix \to \bX$. 
A closed embedding $\iy \to  \ix$ is {\em strong} if $\iy$ 
  stack-theoretically saturated with respect to the morphism $\pi$.
A regular embedding which is strong will be
  called a {\em strong regular embedding}.
\end{definition}

\begin{remark}
In \cite{Edi:16} the first author considered the notion of strong
regular embeddings of tame\footnote{A stack is tame if the stabilizer of every closed point is finite and  linearly reductive. In characteristic 0 a stack is tame if and only if it is Deligne-Mumford.} stacks.
\end{remark}

\begin{remark} Theorem 2.9 of \cite{AHR:15} states that the good
  moduli space morphism $\pi \colon \ix \to \bX$ looks \'etale locally
  like the morphism $[\Spec A/G] \to \Spec A^G$ where $G$ is a
  linearly reductive group acting on a finitely generated $k$-algebra
  $A$.  The condition that a closed embedding $\iy \to \ix$ is strong
  is equivalent to the assertion that the local ideals $I$ of $\iy$
  satisfy $I^G A = I$.
\end{remark}

Strong regular embeddings $\iy \to \ix$ are characterized by a number of
equivalent properties including
\begin{enumerate}
\item The morphism of good moduli spaces induced by the closed embedding $\iy \to \ix$ is a regular embedding and the diagram
$$\xymatrix{\iy \ar[d]\ar[r] & \ix\ar[d]\\ \bY \ar[r] & \bX}$$
is cartesian.

\item
  The stabilizer $G_y$ of any point $y$ of $\iy$ acts trivially on the fiber of the normal bundle $\N_{\iy/\ix,y}$.
\end{enumerate}

These facts follows from the proof of \cite[Theorem 2.2]{Edi:16} for tame stacks since the proof only uses the fact a tame stack is \'etale locally the quotient of an affine scheme by a linearly reductive group.

We next extend the notion of strong regular embedding to that of a strong lci morphism.
\begin{definition}
\label{def:strong-lci}
  A morphism of $\iy \to \ix$ is a {\em strong lci morphism} if it factors
  as $\iy \to \Pro(\ie) \to \ix$ where $\iy \to \Pro(\ie)$ is a strong
  regular embedding and $\ie$ is a vector bundle on $\ix$ such that
  the stabilizer of any point $x$ of $\ix$ acts trivially on the fiber $\ie_x$.
\end{definition}

\begin{proposition}
  If $ f \colon \iy \to \ix$ is a strong lci morphism and $\bX$ is a good moduli
  space for $\ix$ then $\iy$ has a good moduli space $\bY$. Moreover,
  the induced morphism of good moduli spaces $\bY \to \bX$ is lci and
  the diagram $$\xymatrix{\iy \ar[d]\ar[r] & \ix\ar[d]\\ \bY \ar[r] &
    \bX}$$ is cartesian.
\end{proposition}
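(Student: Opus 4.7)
The plan is to split the proof along the factorization $\iy \to \Pro(\ie) \to \ix$ supplied by Definition \ref{def:strong-lci}, handling the projective bundle projection and the strong regular embedding separately and then composing the resulting cartesian squares.

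First I would show that $\ie$ descends to a vector bundle on $\bX$. Since $\pi$ is cohomologically affine, $\bE := \pi_*\ie$ is coherent on $\bX$. Working \'etale locally via \cite[Theorem 2.9]{AHR:15}, we may reduce to $\ix = [\Spec A/G]$ with $\bX = \Spec A^G$, so that $\ie$ corresponds to a $G$-equivariant finitely generated $A$-module $M$. The hypothesis that each stabilizer $G_x$ acts trivially on $\ie_x$ translates into the condition that $G_x$ acts trivially on $M/\mathfrak{m}_x M$ at every closed $G$-orbit. A standard argument using the reductivity of the stabilizers and a Luna slice then shows the natural map $A \otimes_{A^G} M^G \to M$ is an isomorphism; globally this yields a natural isomorphism $\pi^*\bE \xrightarrow{\sim} \ie$ with $\bE$ locally free. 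In particular $\Pro(\ie) = \ix \times_{\bX} \Pro(\bE)$.

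Next I would verify that $\Pro(\bE)$ is the good moduli space of $\Pro(\ie)$. Both cohomological affineness and the Stein condition for $\Pro(\ie) \to \Pro(\bE)$ may be checked Zariski-locally on $\Pro(\bE)$, where $\bE$ trivializes; the problem then reduces to the product of $\pi$ restricted to an affine open of $\bX$ with an affine chart of projective space, for which the claim is immediate from the good moduli space property of $\pi$ itself. The square with vertices $\Pro(\ie), \ix, \Pro(\bE), \bX$ is cartesian by the previous step, and the bottom morphism $\Pro(\bE) \to \bX$ is smooth, hence lci. Finally I would apply the characterizations of strong regular embeddings listed immediately before the proposition to the map $\iy \to \Pro(\ie)$. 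This produces a good moduli space $\bY$ for $\iy$, a regular embedding $\bY \to \Pro(\bE)$, and a cartesian square. Pasting this square with the cartesian square from the projective bundle step yields the diagram in the statement, and the composition $\bY \to \Pro(\bE) \to \bX$ of a regular embedding with a smooth morphism is lci.

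The main obstacle I anticipate is the descent step for $\ie$: verifying that pointwise triviality of the stabilizer actions really forces $\pi^*\pi_*\ie \to \ie$ to be an isomorphism with $\pi_*\ie$ locally free on $\bX$. Once this descent is in hand, the remaining work consists of the formal consequences of the previously recorded properties of strong regular embeddings, the behaviour of good moduli space morphisms under flat base change along open immersions, and the standard fact that lci morphisms are stable under composition and base change along smooth maps.
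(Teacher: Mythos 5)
Your proposal follows essentially the same route as the paper: factor through $\Pro(\ie)$, descend $\ie$ to a vector bundle $\bE$ on $\bX$ using the triviality of the stabilizer actions on fibers, identify $\Pro(\bE)$ as the good moduli space of $\Pro(\ie)$ (the paper cites \cite[Proposition 4.7]{Alp:13} where you sketch a local verification), and then invoke the characterization of strong regular embeddings from \cite[Theorem 2.2]{Edi:16} for $\iy \to \Pro(\ie)$ before pasting the two cartesian squares. The extra detail you supply on the descent step and on the lci property of the composite $\bY \to \Pro(\bE) \to \bX$ is correct and simply fills in what the paper leaves to its references.
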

\begin{proof}
  By assumption, the map $\iy\to\ix$ has a factorization
  $\iy\to\Pro(\ie)\to\ix$ as in Definition \ref{def:strong-lci}.  If
  $G_x$ acts trivially on the fiber $\ie_x$ for all points $x$ of
  $\ix$, then $\ie = \pi^*{\bE}$ where $\bE$ is a vector bundle on
  $\bX$.  It follows from \cite[Proposition 4.7]{Alp:13} that
  $\Pro(\bE)$ is the good moduli space of $\Pro(\ie)$. Since $\iy$ is
  strongly embedded in $\Pro(\bE)$ the proposition follows from the
  corresponding statement for strong regular embeddings of tame stacks
  \cite[Theorem 2.2]{Edi:16}.
\end{proof}

\section{Pushforwards in Chow groups: an answer to Question \ref{ques.pushforward}}
\label{sec:Reichstein}

The goal of this section is to prove the following theorem.
\begin{theorem} \label{thm.pushforward}
Let $\ix$ be a properly stable smooth Artin stack with good moduli space morphism $\pi\colon\ix\to\bX$.
Then there is a pushforward map $\pi_* \colon A^*(\ix)_\Q \to A^*(\bX)_\Q$
such that
\begin{enumerate}
\item If $x$ is a closed point with finite stabilizer group $G_x$, then $\pi_*[x] = |G_x|^{-1}[\pi(x)]$.

\item\label{item:stable-locus}  More generally, if $\iz$ is an irreducible closed substack of $\ix$
which is contained in the maximal saturated DM substack $\ix^{ps}$ then, $\pi_*[\iz] = e_\iz^{-1} [\pi(\iz)]$
where $e_\iz$ is the generic 
order of the stabilizer group along $\iz$. 

\item\label{item:functoriality}
The pushforward commutes with strong proper lci morphisms. Precisely,
if $f \colon \iy \to \ix$ is a strong lci morphism of smooth properly stable
Artin stacks\footnote{If $\ix$ is properly stable then $\iy$ is automatically
properly stable as well.} and $\fbar \colon \bY \to \bX$ is the induced map on good moduli spaces, then the diagram of Chow groups of stacks and good moduli spaces commutes
$$\xymatrix{A^*(\iy)_\Q \ar[d]^{\pi_{\iy,*}} \ar[r]^{f_*} & A^*(\ix)_\Q \ar[d]^{\pi_{\ix,*}} \\
A^*(\bY)_\Q \ar[r]^{\fbar_*} & A^*(\bX)_\Q}
$$
\end{enumerate}

\end{theorem}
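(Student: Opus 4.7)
The plan is to reduce the problem to the Deligne--Mumford case via the Kirwan--Reichstein partial desingularization \cite{EdRy:16}. Because $\ix$ is properly stable, there is a canonical finite sequence of blowups along strong regular centers $b\colon \tilde{\ix}\to \ix$ such that $\tilde{\ix}$ is a smooth Deligne--Mumford stack and $b$ restricts to an isomorphism over the maximal saturated DM open substack $\ix^{ps}\subset\ix$. By iterated application of the proposition above, $b$ induces a proper birational lci morphism $\bar{b}\colon\tilde{\bX}\to\bX$ of good moduli spaces, which is an isomorphism over $\pi(\ix^{ps})$.

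I would then define the pushforward by the composition
\[
\pi_*\;:=\;\bar{b}_*\circ \tilde{\pi}_*\circ b^*,
\]
where $\tilde{\pi}_*\colon A^*(\tilde{\ix})_\Q\stackrel{\sim}{\longrightarrow} A^*(\tilde{\bX})_\Q$ is Vistoli's pushforward isomorphism for the smooth DM stack $\tilde{\ix}$ and its coarse space. Each piece is individually available: $b^*$ because $\ix$ and $\tilde{\ix}$ are both smooth, $\tilde{\pi}_*$ because $\tilde{\ix}$ is DM, and $\bar{b}_*$ because $\bar{b}$ is proper.

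Before checking properties (1)--(3), I would show the definition is independent of the chosen Kirwan--Reichstein desingularization: any two such resolutions admit a common further strong lci refinement, and the birational projection formula $b_*b^*=\mathrm{id}$ on rational Chow groups of smooth stacks, together with its analogue for the good moduli spaces, forces both composites to agree. For property (1), a closed point $x$ with finite stabilizer lies in $\ix^{ps}$, and since $b$ and $\bar{b}$ are isomorphisms over neighborhoods of $x$ and $\pi(x)$ respectively, the claim reduces immediately to Vistoli's formula on $\tilde{\ix}$. Property (2) is the same argument applied to an arbitrary irreducible closed substack $\iz\subset\ix^{ps}$, whose proper transform maps isomorphically to $\iz$ under $b$.

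The main obstacle is functoriality (3). Given a strong lci morphism $f\colon\iy\to\ix$, the crux is to show that the Kirwan--Reichstein procedure on $\ix$ pulls back along $f$ to a Kirwan--Reichstein procedure on $\iy$: because $f$ is \emph{strong}, each strong regular center on $\ix$ pulls back to a strong regular center on $\iy$, and the resulting cartesian squares persist on the good moduli spaces. This yields a compatible pair of partial desingularizations $\tilde{f}\colon\tilde{\iy}\to\tilde{\ix}$ and its analogue on good moduli spaces, and the required commutativity then follows by combining the known functoriality of Vistoli's pushforward for proper maps between smooth DM stacks with ordinary proper-pushforward functoriality for the two Kirwan--Reichstein maps $\bar{b}$. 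The substantive point is that strongness must be preserved under strong lci pullback; without the strong hypothesis on $f$, the induced morphism of good moduli spaces need not even exist, and functoriality genuinely fails.
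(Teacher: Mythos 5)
Your proposal follows essentially the same route as the paper: reduce to the Deligne--Mumford case via the canonical Reichstein/partial-desingularization sequence of \cite{EdRy:16}, define $\pi_*$ as the composite of lci pullback, the Vistoli pushforward, and proper pushforward on good moduli spaces, and deduce (2) from the isomorphism over $\ix^{ps}$ and (3) from functoriality of the construction for strong lci morphisms. One small caveat: each Reichstein transformation is an \emph{open substack} of a blowup (the complement of the strict transform of the saturation of the center), so the map $b\colon\tilde{\ix}\to\ix$ is not proper and the projection formula $b_*b^*=\mathrm{id}$ you invoke for well-definedness is not available as stated; the paper sidesteps this entirely by using the canonical sequence, for which no independence check is needed.
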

\subsection{The example of \cite{EGS:13}} \label{sec.egs} An obvious question is whether the functoriality property (\ref{item:functoriality}) of Theorem \ref{thm.pushforward} holds for arbitrary lci morphisms of Artin stacks, as opposed to strong lci morphisms.  In \cite[Theorem 1]{EGS:13}, we showed that any choice of pushforward map that commutes with all regular embeddings must, in fact, be the 0 map. Thus, we cannot expect the functoriality property Theorem \ref{thm.pushforward} (\ref{item:functoriality}) to extend to arbitrary lci morphisms, let alone arbitrary regular embeddings. This shows the importance of the condition that the lci morphisms be strong.

Let us briefly recall the example of \cite{EGS:13}. Consider the action of $\G_m^3$ on $\A^5$ with weight matrix
$$\left(\begin{array}{ccccc}
1 &  0 & 0 & 1 & 1\\
0 & 1 & 0 & 1 & 1 \\
0 & 0 & 1 & 0 & 1
\end{array}\right)$$ so that $(s,t,u)\in\G_m^3$ acts by $(s,t,u) \cdot (x,y,z,w,v) = (sx,ty, uv, stz, stu w)$. The quotient of the open set $X = \A^5 \smallsetminus V(xyz, zw, v)$ is the projective plane $\Proj\; k[xyz, zw, v]$. Hence $\Pro^2$ is the good moduli space
of the quotient stack $[X/\G_m^2]$. In \cite[Theorem 1]{EGS:13} we showed that any choice of pushforward map for Artin stacks with good moduli spaces that commutes
with the inclusion of the smooth (and hence regularly embedded)
substacks $[V(x)/\G_m^2]$ and $[V(y)/\G_m^2]$ must be the 0 map.

Note however,
that neither $[V(x)/\G_m^2]$ nor $[V(y)/\G_m^2]$ is \emph{strongly} embedded. To see this, notice that $D(v)$ is an affine open subset of $X$ that is $\G_m^2$-invariant, and that $V(x)\cap D(v)$ is not defined by a $\G_m^2$-invariant function. Indeed the substack $[V(x)/\G_m^2]$ contains no stable or properly stable points, since every point with vanishing $x$-coordinate is in the saturation of the locus $V(x,y)$ which has positive dimensional stabilizer at all points. Similarly, $[V(y)/\G_m^2]$ is not strongly embedded.

By contrast, $[V(v)/G]$ is a strong substack. To see this note that $X$ is covered by the 3 affine open $\G_m^2$-invariant subsets $D(v)$, $D(xyz)$, and $D(zw)$. On each of these affine patches, $V(v)$ is defined by an invariant function: $V(v)\cap D(v)=\emptyset=V(1)$, $V(v)\cap D(xyz) = V(v/xyz)$, and $V(v)\cap D(zw) = V(v/zw)$.

\subsection{Sketch of the proof of Theorem \ref{thm.pushforward}}
The proof Theorem \ref{thm.pushforward} uses the following result of \cite{EdRy:16} which is a generalization of an earlier result for toric stacks in \cite{EdMo:12}.

\begin{theorem} \label{thm.main}
Let $\ix$ be a properly stable Artin stack with good moduli space $\bX$.
There is a canonical sequence of birational morphisms of smooth Artin stacks
$\ix_n \to \ix_{n-1} \to \ldots \to \ix_{0} = \ix$ with the following properties.
\begin{enumerate}
\item The stack $\ix_{n}$ is Deligne-Mumford. 

\item Each $\ix_k$ admits a good moduli space morphism $\ix_k \to \bX_k$ with $\bX_k$ an algebraic space. Moreover, $\ix_k^{ps} \neq \emptyset$.

\item The morphism $\ix_{k+1} \to \ix_{k}$ is an isomorphism over the maximal saturated DM substack $\ix_{k}^{ps}\subset\ix$ and it is an open immersion over the complement of a proper closed substack of $\ic_k \subset \ix_{k}$.

\item The morphism $\ix_{k+1} \to \ix_{k}$ induces a projective birational morphism of good moduli spaces $\bX_{k+1} \to \bX_{k}$.

\item The maximum dimension of the stabilizers of the points of $\ix_{k+1}$ is strictly smaller than that of $\ix_{k}$.
\end{enumerate}
\end{theorem}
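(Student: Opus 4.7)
The plan is to construct the sequence $\ix_n \to \cdots \to \ix_0 = \ix$ by iterated \emph{saturated blowups} along the locus of points with maximum stabilizer dimension. First I would set $d_k := \max \{ \dim G_x : x \in |\ix_k| \text{ closed} \}$ and define $\ic_k \subset \ix_k$ to be the reduced closed substack whose underlying closed points are precisely those with stabilizer of dimension $d_k$; upper-semicontinuity of stabilizer dimension on closed points (which holds for stacks admitting good moduli spaces) ensures $\ic_k$ is indeed closed, and since $d_k > 0$ so long as $\ix_k$ is not Deligne--Mumford, $\ic_k$ is disjoint from $\ix_k^{ps}$. The candidate for $\ix_{k+1}$ is the saturated blowup of $\ix_k$ along $\ic_k$: namely, form the ordinary blowup $\mathrm{Bl}_{\ic_k} \ix_k$ and remove the closed substack of points where the stabilizer still has dimension $\geq d_k$ (which is contained in the strict transform together with part of the exceptional divisor).

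Second, I would verify the local picture using the Alper--Hall--Rydh étale local structure theorem \cite[Theorem 2.9]{AHR:15}, reducing to the model $[\Spec A / G]$ with $G$ linearly reductive and $\ic_k$ cut out by a $G$-invariant ideal $I$ with $I^G A = I$ (so that it descends to $\Spec A^G$). On the blowup, the exceptional divisor is $[\Pro(I/I^2)/G]$, a projective bundle over $[\ic_k \cap \Spec A / G]$. The key computation is that on the semistable locus of the blowup, the fiberwise stabilizer of a point in $\Pro(I/I^2)$ lying over $z\in\ic_k$ is the stabilizer in $G_z$ of a semistable line in the normal representation $(I/I^2)_z$; since $G_z$ acts on this representation with finite kernel off a proper subspace (for otherwise $z$ would lie in a larger stabilizer stratum, contradicting maximality of $d_k$), every such semistable line has stabilizer of dimension strictly less than $d_k$. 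This simultaneously shows that $\ix_{k+1}$ admits a good moduli space $\bX_{k+1}$ (because the saturated blowup is constructed étale-locally from GIT, and Alper's gluing criterion for good moduli spaces applies), that the induced morphism $\bX_{k+1} \to \bX_k$ is the projective blowup of $\bX_k$ along the image of $\ic_k$, and that $d_{k+1} < d_k$.

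Third, properties (1)--(5) would then follow: termination in finitely many steps is automatic since $d_k$ strictly decreases and is bounded below by $0$, giving (1); (2) holds by construction and the local argument above; (3) is immediate because $\ic_k$ is disjoint from $\ix_k^{ps}$, so the blowup center misses $\ix_k^{ps}$ and the saturated blowup restricts there to an isomorphism, while away from $\ic_k$ the blowup is an open immersion by general properties of blowups along proper centers; (4) follows because ordinary blowups induce projective birational morphisms of good moduli spaces, and the saturation step removes a substack that does not meet the generic locus; and (5) is the fiberwise stabilizer calculation above.

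The main technical obstacle is the stabilizer-drop statement (5), i.e.\ showing that after passing to the semistable locus of the blowup, no point retains a stabilizer of dimension $d_k$. Naively one might fear that the $G_z$-action on $(I/I^2)_z$ could be trivial or fix an entire hyperplane of semistable lines, but this is ruled out precisely by the maximality of $d_k$: the fixed locus of a positive-dimensional subgroup of $G_z$ in the normal slice corresponds to a substack of $\ix_k$ whose closed points would have stabilizer strictly containing $G_z$ of dimension at least $d_k$, and hence must already lie in $\ic_k$; but points in the direction of $\ic_k$ are unstable in the blowup and are removed by saturation. Carrying this argument out carefully --- and checking that the various local constructions glue to a global canonical modification of $\ix_k$ independent of the étale chart --- is the heart of the proof, and is precisely the content of \cite{EdRy:16}.
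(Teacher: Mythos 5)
This theorem is not proved in the paper: it is quoted from \cite{EdRy:16} (listed as ``in preparation''), and the paper only describes the construction in the remark following the statement, where $\ix_{k+1}$ is defined as the complement of the strict transform of the saturation $\is_k$ of the maximal-stabilizer locus $\ic_k$ inside the blowup of $\ix_k$ along $\ic_k$. Your proposal follows exactly this Reichstein-transformation strategy and correctly identifies where the difficulty lies, so in approach it matches the source; but as written it is a sketch that ultimately defers the two hardest points --- that the saturated blowup again admits a good moduli space, and that the maximal stabilizer dimension strictly drops --- to the very reference being proved, so it does not constitute an independent proof.

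Two specific gaps are worth flagging. First, you define $\ix_{k+1}$ by deleting the locus of the blowup where the stabilizer dimension is still $\geq d_k$, whereas the actual construction deletes the strict transform of the \emph{saturation} $\is_k$; that these agree (and that the former is even closed and yields something with a good moduli space) is part of what must be proved, not a starting point. Second, your stabilizer-drop argument is not airtight: the stabilizer of a point of $\Pro\bigl((I/I^2)_z\bigr)$ is the stabilizer of a \emph{line}, and a positive-dimensional subgroup of $G_z$ can preserve a line while acting on it through a nontrivial character, so ``$G_z$ acts with finite kernel off a proper subspace'' does not by itself force the dimension of the line-stabilizer below $d_k$. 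The lines preserved-but-not-fixed in this way are precisely the directions pointing into $\is_k$, which is why one must remove the strict transform of the saturation and then check that what remains has smaller stabilizers; your final paragraph gestures at this but does not close the loop. Also note that the smoothness of $\ic_k$ (needed for the blowup of a smooth stack to remain smooth) uses smoothness of $\ix$, which you should invoke explicitly.
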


\begin{remark}
  The birational morphisms $\ix_{k+1} \to \ix_{k}$ are called {\em
    Reichstein transformations}. They are defined as follows. Let
  $\ic_k$ be the substack of $\ix_k$ parametrizing points with
    maximal dimensional stabilizer. This locus is necessarily a closed
    smooth substack of $\ix$. Let $\is_k$ be the saturation of $\ic_k$
    with respect to the good moduli space morphism $\ix_{k} \to
    \bX_k$. Then $\ix_{k+1}$ is defined as the complement of the
    strict transform of $\is_k$ in the blow up of $\ix_{k}$ along
    $\ic_k$
\end{remark}
\begin{proof}[Proof of Theorem \ref{thm.pushforward}]
  The map $\pi_* \colon A^*(\ix)_\Q \to A^*(\bX)_\Q$ is defined as
  follows.  The morphisms $\ix_{k+1} \to \ix_{k}$ are representable
  morphisms of smooth stacks. In particular they are lci so, the
  composite morphism $f \colon \ix_n \to \ix$ is as well.  Hence there
  is a pullback of Chow group $f^* \colon A^*(\ix_{n}) \to
  A^*(\ix)$. On the other hand the morphisms of good moduli space
  $\bX_{k+1} \to \bX_{k}$ are birational and projective, so the composite map
$\fbar \colon \bX_n \to \bX$ is also birational and projective. Thus, 
there is a pushforward
  $\fbar_* \colon A^*(\bX_n) \to A^*(\bX)$. Since $\ix_n$ is a DM
  stack, we also have a pushforward of Chow groups $\pi_{n*} \colon
  A^*(\ix_n)_\Q \to A^*(\bX)_\Q$. We then define $\pi_* \colon
  A^*(\ix)_\Q \to A^*(\bX)_\Q$ as the composite $\fbar_* \circ \pi_{n*}
  \circ f^*$.

  Since the maps $\ix_{k+1} \to \ix_k$ are isomorphisms over the
  properly stable locus in $\ix$, if $\iz \subset \ix$ is a closed substack
  contained in the properly stable locus then $\pi_* [\iz]$ can be
  identified with $\pi_{n*} [\iz] = e_{\iz}^{-1} [\pi_n(\iz)] =
  e_{\iz}^{-1}[\pi(\iz)]$. This proves statement 2 of the theorem.
  Finally the last statement follows because the construction of
  \cite{EdRy:16} is functorial for strong lci morphisms.
\end{proof}

\begin{remark} A similar construction for a class of toric stacks was given in \cite{EdMo:13}.
\end{remark}
\section{Toward a theory of strong Chow groups: a conjectural answer to Question \ref{ques.subring}}
\label{sec:strong-Chow}

As discussed in Section \ref{sec.egs}, the example of \cite{EGS:13}
shows that there \emph{does not exist} a pushforward map $\pi_*\colon
A^*(\ix)\to A^*(X)$ that is functorial for {\em all} regular embeddings (let
alone all lci morphisms). So, rather than focusing on defining a
pushforward from $A^*(\ix)$, we focus on the subgroup generated by
strong cycles.

\begin{definition}
\label{def:strong-Chow}
Let $\pi:\ix\to \bX$ be a good moduli space map from an irreducible properly stable Artin stack. Then the {\em strong relative Chow group} $A^k_{st}(\ix/\bX)$ is the subgroup generated by $\sum c_i[\iz_i]$ with $\iz_i$ strong.
\end{definition}

We now state a series of conjectures concerning $A^k_{st}(\ix/\bX)$. Our first gives  a conjectural answer to Question \ref{ques.pushforward}.

\begin{conjecture} \label{conj.strongChow-push-forward}
If $\ix$ is a properly stable smooth Artin stack and $\pi\colon\ix\to\bX$ its good moduli space, then the assignment 
$[\iz] \mapsto e_\iz^{-1}[\pi(\iz)]$ for strong cycles $\iz$ respects rational equivalence, so we obtain a pushforward map $\pi_{st,*}\colon A^k_{st}(\ix/\bX)_\Q\to A^k(\bX)_\Q$.
\end{conjecture}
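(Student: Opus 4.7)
The plan is to leverage the pushforward $\pi_* \colon A^*(\ix)_\Q \to A^*(\bX)_\Q$ constructed in Theorem \ref{thm.pushforward}. Since $\pi_*$ is automatically well-defined on rational equivalence classes, it suffices to prove that $\pi_*[\iz] = e_\iz^{-1}[\pi(\iz)]$ for every irreducible strong closed substack $\iz \subset \ix$, adopting the convention $e_\iz^{-1} = 0$ when the generic stabilizer of $\iz$ has positive dimension. Granting this, the restriction of $\pi_*$ to the subgroup $A^k_{st}(\ix/\bX)_\Q$ is the sought pushforward $\pi_{st,*}$.

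To establish this identity, I would unwind the description $\pi_* = \fbar_* \circ \pi_{n*} \circ f^*$ from the proof of Theorem \ref{thm.pushforward}, where $f \colon \ix_n \to \ix$ is the composite of Reichstein transformations of Theorem \ref{thm.main} and $\fbar \colon \bX_n \to \bX$ is the induced birational projective morphism of good moduli spaces. The crucial step is to show that, for an irreducible strong $\iz \subset \ix$ meeting $\ix^{ps}$ (so $e_\iz < \infty$), the pullback $f^*[\iz] \in A^*(\ix_n)_\Q$ equals the fundamental class $[\iz_n]$ of the iterated strict transform. Granting this, Theorem \ref{thm.pushforward}(\ref{item:stable-locus}) applied to the DM stack $\ix_n$ yields $\pi_{n*}[\iz_n] = e_{\iz_n}^{-1}[\pi_n(\iz_n)]$; since $\iz_n \to \iz$ is birational it preserves the generic stabilizer order, giving $e_{\iz_n} = e_\iz$, and $\fbar$ restricts to a birational projective morphism $\pi_n(\iz_n) \to \pi(\iz)$, so $\fbar_*[\pi_n(\iz_n)] = [\pi(\iz)]$. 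Composing gives exactly $\pi_*[\iz] = e_\iz^{-1}[\pi(\iz)]$.

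The pullback identity reduces by induction on $n$ to the single step $\ix_{k+1} \to \ix_k$. Writing $\iz_k$ for the iterated strict transform and using $\ix_{k+1} = \operatorname{Bl}_{\ic_k}(\ix_k) \setminus \tilde{\is_k}$, I would argue that because $\iz_k$ is saturated, its intersection with the max-stabilizer locus $\ic_k$ is itself saturated inside $\iz_k$; consequently any excess-intersection contribution to the blow-up pullback along the exceptional divisor is supported within the strict transform of $\is_k$, and so is excised in passing to $\ix_{k+1}$. This geometric vanishing of excess intersection is precisely the cycle-level manifestation of the functoriality of Reichstein transformations under strong morphisms established in Theorem \ref{thm.pushforward}(\ref{item:functoriality}). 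The remaining case of a strong $\iz$ with $\iz \cap \ix^{ps} = \emptyset$ requires showing that the iterated strict transform becomes empty after finitely many steps, so $f^*[\iz] = 0$, matching the convention $e_\iz^{-1} = 0$.

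The main obstacle is establishing the pullback identity $f^*[\iz] = [\iz_n]$ rigorously; controlling the potential excess-intersection terms requires a careful \'etale-local analysis of strong embeddings interacting with the blow-up and saturation constructions, using the local model $[\operatorname{Spec} A/G] \to \operatorname{Spec} A^G$ from \cite{AHR:15}. A secondary but essential point is to formalize the vanishing of $f^*[\iz]$ for strong cycles entirely outside the properly stable locus; this should ultimately follow from the fact that Reichstein transformations progressively shrink the non-properly-stable locus, but a clean proof will require careful bookkeeping of the interaction between saturations and iterated blow-ups.
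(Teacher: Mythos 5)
This statement is labeled a \emph{conjecture} in the paper (Conjecture \ref{conj.strongChow-push-forward}), and the paper offers no proof of it: the authors only verify it in a handful of examples and prove two special cases, namely strong $0$-cycles (Remark \ref{rmk:conj-Reichstein-0-cycles}) and \emph{smooth} strong substacks (Remark \ref{rmk:conj-for-smooth-strong-substacks}). So there is no proof in the paper to compare yours against; what you have written is a proposed attack on an open problem, and it should be judged on its own.

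Your outline follows the natural route --- reduce to the stronger Conjecture \ref{conj.Reichstein-push-forward} and unwind $\pi_*=\fbar_*\circ\pi_{n*}\circ f^*$ through the Reichstein transformations --- and in the smooth case it reproduces exactly the argument of Remark \ref{rmk:conj-for-smooth-strong-substacks}. But the step you flag as ``the main obstacle,'' namely $f^*[\iz]=[\iz_n]$ for a possibly singular strong cycle $\iz$, is not a technical loose end: it is the entire content of the conjecture, and your sketch of it does not go through as stated. Two concrete problems. First, your justification that the excess-intersection terms are ``supported within the strict transform of $\is_k$'' and hence excised is unsubstantiated: $\ix_{k+1}$ removes only the strict transform of $\is_k$ from the blow-up, while the rest of the exceptional divisor over $\ic_k$ survives; the excess cycle attached to $\iz_k$ lives in the projectivized normal cone of $\iz_k\cap\ic_k$ inside $\iz_k$, and there is no a priori reason for it to land in the strict transform of $\is_k$ rather than in the retained part of the exceptional locus. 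Second, you invoke Theorem \ref{thm.pushforward}(\ref{item:functoriality}) as the ``cycle-level manifestation'' of this vanishing, but that statement applies only to strong \emph{lci} morphisms of \emph{smooth} properly stable stacks; the inclusion of a singular strong cycle is not lci, so the functoriality does not apply --- this is precisely why the paper's Remark \ref{rmk:conj-for-smooth-strong-substacks} is restricted to smooth strong substacks. Finally, your convention $e_\iz^{-1}=0$ for strong cycles with positive-dimensional generic stabilizer is an interpretation not present in the paper (which defines $e_\iz$ as the generic order of the stabilizer and leaves this case unaddressed), so even the statement you are proving differs slightly from the one conjectured. In short: your strategy is the sensible one, but the proposal does not close the gap that makes this a conjecture rather than a theorem.
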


We also state a stronger form of Conjecture \ref{conj.strongChow-push-forward}:

\begin{conjecture} \label{conj.Reichstein-push-forward} If $\ix$ is a
  properly stable smooth Artin stack and $\pi\colon\ix\to\bX$ its good
  moduli space, then the pushforward map $\pi_*\colon A^*(\ix)_\Q\to A^*(\bX)_\Q$ defined in Theorem
  \ref{thm.pushforward} satisfies $\pi_*[\iz]:=e_\iz^{-1}[\pi(\iz)]$ for strong
  cycles $\iz$.
\end{conjecture}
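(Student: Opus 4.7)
The plan is to prove the conjecture by induction on the Reichstein depth $n$ of $\ix$, meaning the minimal length of the canonical tower $\ix_n \to \ix_{n-1} \to \cdots \to \ix_0 = \ix$ of Theorem \ref{thm.main}. For $n = 0$ the stack $\ix$ is already Deligne--Mumford, and the pushforward $\pi_*$ constructed in Theorem \ref{thm.pushforward} reduces to Vistoli's classical pushforward, for which $\pi_*[\iz] = e_\iz^{-1}[\pi(\iz)]$ holds for every integral closed substack $\iz$. This handles the base case.

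For the inductive step, let $f_1 \colon \ix_1 \to \ix$ be the first Reichstein transformation, with blow-up center $\ic_0 \subset \ix$ (the maximal-stabilizer locus) and saturation $\is_0 = \pi^{-1}(\pi(\ic_0))$, and let $\bar{f}_1 \colon \bX_1 \to \bX$ be the induced projective birational morphism of good moduli spaces. By the construction of Theorem \ref{thm.pushforward} one has $\pi_* = \bar{f}_{1*} \circ \pi_{1*} \circ f_1^*$, so it suffices to compute each factor applied to $[\iz]$. The key technical step is to produce, via excess intersection theory for the blow-up $Bl_{\ic_0}(\ix) \to \ix$ restricted to $\ix_1$, a decomposition
\[
f_1^*[\iz] \;=\; [\widetilde{\iz}] + C,
\]
where $\widetilde{\iz}$ denotes the (possibly empty) strict transform of $\iz$ in $\ix_1$ and $C$ is a cycle supported on the exceptional divisor $E \cap \ix_1$.

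Setting $\bZ = \pi(\iz)$, I would next verify that $\widetilde{\iz}$, when nonempty, is a strong cycle in $\ix_1$ with the same generic stabilizer order $e_\iz$, and that $\pi_1(\widetilde{\iz})$ equals the strict transform $\widetilde{\bZ}$ of $\bZ$ in $\bX_1$. This uses the local description of strong substacks via $G$-invariant ideals in the \'etale slice model of \cite[Theorem 2.9]{AHR:15}, combined with the fact that strict transforms preserve the generic point and its stabilizer. The inductive hypothesis applied to $\ix_1$ then yields $\pi_{1*}[\widetilde{\iz}] = e_\iz^{-1}[\widetilde{\bZ}]$, and since $\bar{f}_1$ is birational and projective, $\bar{f}_{1*}[\widetilde{\bZ}] = [\bZ]$. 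Hence the strict-transform contribution to $\pi_*[\iz]$ is exactly $e_\iz^{-1}[\bZ]$, as desired.

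The main obstacle is controlling the exceptional contribution $C$: one must show that $\bar{f}_{1*}\pi_{1*}(C)$ vanishes when $\widetilde{\iz}$ is nonempty, and that it supplies the full $e_\iz^{-1}[\bZ]$ when $\iz \subseteq \is_0$ (in which case the strict transform is entirely absorbed into $\widetilde{\is_0}$ and hence removed from $\ix_1$). I expect both cases to reduce, via the \'etale slice theorem, to a local computation in $[V/G]$ with $V$ a representation of the stabilizer $G$ of a closed point of $\ic_0$; the intersection-theoretic multiplicity of the $G$-invariant ideal defining $\iz$ along $\ic_0$ must then exactly cancel the stabilizer orders appearing in $\pi_{1*}$ applied to the exceptional fiber, as illustrated by the action of $\G_m$ on $\A^2$ with weights $(1,-1)$, where the multiplicity $2$ of $V(xy)$ at the origin cancels the order-$2$ stabilizer $\mu_2$ on the exceptional substack $[\G_m/\G_m] \simeq B\mu_2 \subset \ix_1$.
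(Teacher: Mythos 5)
The statement you are trying to prove is stated in the paper as a \emph{conjecture}, not a theorem: the authors do not give a proof, and they only establish it for strong $0$-cycles (via Theorem \ref{thm.pushforward}~(\ref{item:stable-locus})) and for smooth strong substacks (via functoriality for strong regular embeddings, Theorem \ref{thm.pushforward}~(\ref{item:functoriality})), together with case-by-case verifications in the examples. So there is no proof in the paper to compare against, and your proposal must be judged as an attempt at an open problem.

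As such, it has a genuine gap exactly where the difficulty of the conjecture is concentrated. Your reduction $\pi_* = \bar f_{1*}\circ\pi_{1*}\circ f_1^*$ and the decomposition $f_1^*[\iz] = [\widetilde{\iz}] + C$ with $C$ supported on the exceptional locus are reasonable (modulo checking that the canonical tower over $\ix_1$ is the truncation of the tower over $\ix$), but everything then hinges on showing that $\bar f_{1*}\pi_{1*}(C)=0$ when the strict transform survives, and that $C$ alone accounts for $e_\iz^{-1}[\pi(\iz)]$ when $\iz\subseteq\is_0$. You do not prove either claim; you only assert that you ``expect'' a local computation in an \'etale slice $[V/G]$ to produce the required cancellation of multiplicities against stabilizer orders. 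That expected cancellation \emph{is} the conjecture: the paper's Example \ref{ex.quadriccone} shows how delicate the interaction between strong cycles, saturations, and the pushforward is (e.g.\ $\pi_*$ fails to be injective on strong cycles there), and your illustrative $\G_m$-on-$\A^2$ example only tests the identity in a Chow group where both sides vanish. In addition, two intermediate claims are unjustified and nontrivial: (i) that the strict transform of a strong cycle is again strong in $\ix_1$ with the same generic stabilizer order --- strongness is the condition that the ideal is \'etale-locally generated by invariants, and there is no reason offered that this survives blowing up along $\ic_0$ and restricting to $\ix_1$; and (ii) that $\pi_1(\widetilde{\iz})$ is the strict transform of $\pi(\iz)$ in $\bX_1$. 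Without (i) the inductive hypothesis does not even apply to $\widetilde{\iz}$. Until the exceptional contribution is controlled and (i)--(ii) are established, this is a plausible strategy outline rather than a proof.
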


\begin{remark}
\label{rmk:conj-Reichstein-0-cycles}
Note that by definition, any strong 0-cycle must be contained in the stable locus so Theorem \ref{thm.pushforward} (\ref{item:stable-locus}) implies that Conjecture \ref{conj.Reichstein-push-forward} holds for strong 0-cycles.
\end{remark}

\begin{remark}
\label{rmk:conj-for-smooth-strong-substacks}
Conjecture \ref{conj.Reichstein-push-forward} is also true for smooth strong substacks $\iz\subseteq\ix$. Indeed, since $\iz$ is smooth, the inclusion map to $\ix$ is a strong regular embedding and so by Theorem \ref{thm.pushforward} (\ref{item:functoriality}), we reduce to the case that $\iz=\ix$ is the fundamental class. Since all maps $\ix_{i+1}\to\ix_i$ in Theorem \ref{thm.main} are birational, we therefore have $\pi_*[\iz] = e_\iz^{-1}[\iz]$.
\end{remark}

We now turn to Question \ref{ques.subring} where our main conjectures are as follows.

\begin{conjecture} \label{conj.strongChow-ring-structure} If $\ix$ is
  a properly stable smooth Artin stack with good moduli space $\bX$,
  then $\oplus_{k=0}^{\dim \ix} A^k_{st}(\ix/\bX)$ is a subring of
  $A^*(\ix)$ under the intersection product.
\end{conjecture}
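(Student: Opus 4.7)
The strategy is to show that for strong substacks $\iz_1, \iz_2 \subseteq \ix$, the intersection product $[\iz_1] \cdot [\iz_2] \in A^*(\ix)_\Q$ admits a representative as a $\Q$-linear combination of classes of strong substacks. The plan is to proceed \'etale-locally, first handling scheme-theoretic intersections, then proper intersections, and finally the general case via a moving argument.

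First, I would establish the local-algebraic structure of strong substacks. By the Alper--Hall--Rydh \'etale-local structure theorem used elsewhere in the paper, $\pi \colon \ix \to \bX$ is \'etale-locally of the form $[\Spec A/G] \to \Spec A^G$ with $G$ linearly reductive. A $G$-equivariant ideal $I \subseteq A$ defines a strong substack precisely when $I = (I \cap A^G)\cdot A$. A direct computation then shows that if $I_1$ and $I_2$ are strong,
\[
I_1 + I_2 \;=\; \bigl((I_1 \cap A^G) + (I_2 \cap A^G)\bigr)\cdot A \;\subseteq\; \bigl((I_1+I_2) \cap A^G\bigr)\cdot A \;\subseteq\; I_1 + I_2,
\]
so equality holds throughout and $I_1 + I_2$ is strong. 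Hence the scheme-theoretic intersection $\iz_1 \cap \iz_2$ is itself strong, and its cycle class lies in $A^*_{st}(\ix/\bX)$.

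Next, I would address the proper-intersection case: when every component of $\iz_1 \cap \iz_2$ has the expected codimension, the intersection product on the smooth stack $\ix$ is given by Serre multiplicities, namely $[\iz_1] \cdot [\iz_2] = \sum_{\mathcal{W}} e_\mathcal{W} [\mathcal{W}]$ with $e_\mathcal{W} = \sum_i (-1)^i \operatorname{length}_{\mathcal{O}_{\ix,\mathcal{W}}} \operatorname{Tor}_i^{\mathcal{O}_\ix}(\mathcal{O}_{\iz_1}, \mathcal{O}_{\iz_2})_\mathcal{W}$. The degree-zero Tor term aggregates to the cycle $[\iz_1 \cap \iz_2]$, which is strong by the preceding step. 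For the higher Tor corrections, I would exploit that each $\iz_i = \pi^{-1}(\bZ_i)$ for a closed subspace $\bZ_i \subseteq \bX$, so $\mathcal{O}_{\iz_i}$ is $\pi^*\mathcal{O}_{\bZ_i}$; combined with the exactness of $\pi_*$ on quasi-coherent sheaves, this should let one identify each $\operatorname{Tor}_i^{\mathcal{O}_\ix}(\mathcal{O}_{\iz_1}, \mathcal{O}_{\iz_2})$ as the pullback of the corresponding Tor sheaf on $\bX$. Such a pullback is supported on the strong substack $\pi^{-1}(\bZ_1 \cap \bZ_2) = \iz_1 \cap \iz_2$, so its cycle class decomposes as a combination of strong classes.

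The main obstacle is the general, non-proper-intersection case, because the classical moving lemma destroys strongness: moving a strong cycle to transverse position with another strong cycle typically produces a cycle which is not saturated with respect to $\pi$. One plan is to prove a moving lemma internal to the strong category: since strong substacks correspond bijectively to closed subspaces of $\bX$ under $\pi$, any perturbation on $\bX$ pulls back to a strong perturbation on $\ix$; the difficulty is that $\bX$ is typically singular, so the classical moving lemma on $\bX$ does not apply directly and one would need to combine it with a resolution. A backup route is to invoke the Reichstein transformation sequence $\ix_n \to \cdots \to \ix_0 = \ix$ of Theorem \ref{thm.main} to reduce to the DM case, where Vistoli's isomorphism $A^*(\ix_n)_\Q \cong A^*(\bX_n)_\Q$ endows the Chow group with an intersection product. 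One would then need to show that the strong subring of $A^*(\ix)_\Q$ maps compatibly into that of $A^*(\ix_n)_\Q$ under the lci pullback $f^*$ and that strong intersection products are detected there. The delicate tracking of strong cycles and their products through the Reichstein transformations --- and in particular whether the saturation-respecting blowups along $\ic_k$ keep strong cycles in the strong subring of $\ix_{k+1}$ --- is where I expect the main work to lie.
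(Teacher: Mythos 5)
This statement is \emph{Conjecture} \ref{conj.strongChow-ring-structure}: the paper offers no proof of it, only verifications in the three worked examples (where $A^*_{st}(\ix/\bX)$ is computed explicitly and closure under multiplication is checked by hand). So there is no argument in the paper to compare yours against; what you have written is an attempt at an open problem, and it does not close it.

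Beyond being incomplete where you say it is incomplete (the moving argument), there are two concrete gaps earlier in your sketch. First, in the proper-intersection step, the identification of $\operatorname{Tor}_i^{\mathcal{O}_\ix}(\mathcal{O}_{\iz_1},\mathcal{O}_{\iz_2})$ with the pullback of the corresponding Tor on $\bX$ is a flat base change statement; it requires $A$ to be flat over $A^G$, which fails for essentially every good moduli space that is not already a coarse space (exactness of $\pi_*$ on quasi-coherent sheaves is a statement about pushforward and does not substitute for flatness of $\pi$). What survives is only that the higher Tor sheaves are supported on $\iz_1\cap\iz_2$. Second, and more fundamentally, knowing that the closed substack $\iz_1\cap\iz_2$ is strong (your ideal computation $I_1+I_2=((I_1+I_2)\cap A^G)A$ is fine) does not place its fundamental cycle in $A^*_{st}(\ix/\bX)$: the group is generated by classes of strong cycles, and an irreducible component of a saturated closed substack need not be saturated. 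For example, in $[\A^2/\G_m]$ with weights $(1,-1)$ and good moduli space $\Spec k[xy]$, the substack $V(xy)$ is strong but neither component $V(x)$ nor $V(y)$ is. So even in the case of proper scheme-theoretic intersection, decomposing $[\iz_1\cap\iz_2]$ into strong generators is already a nontrivial problem, not a formality. Finally, the reduction to the DM stack $\ix_n$ via the Reichstein tower runs into the difficulty that the lci pullback $f^*$ of a strong cycle on $\ix$ has no reason to be (a combination of) strong cycles on $\ix_n$, and injectivity of $f^*$ on the strong subgroup is not known; this is essentially the same difficulty the paper isolates in Conjectures \ref{conj.Reichstein-push-forward} and \ref{conj.Chowsubring}.
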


\begin{conjecture} \label{conj.Chowsubring} Assuming Conjectures
  \ref{conj.strongChow-push-forward} and
  \ref{conj.strongChow-ring-structure}, then there is a subring
  $A^*_{inj}(\ix/\bX)_\Q$ of $A^*_{st}(\ix/\bX)_\Q$
which contains the subalgebra generated by
  $A^1_{st}(\ix/\bX)_\Q$ and has the property that the pushforward
  $\pi_{st,*}$ is injective on $A^*_{inj}(\ix/\bX)_\Q$. Moreover, if
  $\bX$ has only quotient singularities then $\pi_{st,*}$ is bijective
  on all of $A^*_{st}(\ix/\bX)_\Q$.
\end{conjecture}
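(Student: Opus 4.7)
The plan is to define $A^*_{inj}(\ix/\bX)_\Q$ as the subalgebra of $A^*_{st}(\ix/\bX)_\Q$ generated by $A^1_{st}$; Conjecture \ref{conj.strongChow-ring-structure} guarantees that this is a well-defined subring of $A^*(\ix)_\Q$. The key structural observation is that any strong divisor $\iy\subset\ix$ with image $\bY=\pi(\iy)$ satisfies $\iy=\pi^{-1}(\bY)$ scheme-theoretically, so that $I_\iy = \pi^* I_\bY$ and consequently $\mathcal{O}_\ix(\iy) = \pi^*\mathcal{O}_\bX(\bY)$. Every strong divisor class therefore has the form $[\iy] = c_1(\pi^*L)\cap[\ix]$ for some line bundle $L\in\mathrm{Pic}(\bX)$, and every element of $A^*_{inj}$ is a polynomial in such pulled-back first Chern classes.

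To prove injectivity of $\pi_{st,*}$ on $A^*_{inj}$, I would use the projection formula: combined with $\pi_*[\ix] = e_{\ix}^{-1}[\bX]$ from Remark \ref{rmk:conj-for-smooth-strong-substacks}, it yields
\[
\pi_{st,*}\bigl(c_1(\pi^*L_1)\cdots c_1(\pi^*L_k)\cap[\ix]\bigr) = e_{\ix}^{-1}\, c_1(L_1)\cdots c_1(L_k)\cap[\bX],
\]
where the Chern classes on $\bX$ act operationally on $A_*(\bX)_\Q$. To deduce that vanishing of the right-hand side forces vanishing of the left-hand side, I would pass through the Reichstein tower $f\colon\ix_n\to\ix$ of Theorem \ref{thm.main}. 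The pullback $f^*$ preserves the Chern-class representation, sending a strong divisor class $c_1(\pi^*L)$ to $c_1(\pi_n^*\fbar^*L)$, and on the DM stack $\ix_n$ Vistoli's isomorphism $\pi_{n,*}\colon A^*(\ix_n)_\Q \stackrel{\simeq}{\longrightarrow} A^*(\bX_n)_\Q$ reduces injectivity to the question of whether $\fbar_*\colon A^*(\bX_n)_\Q\to A^*(\bX)_\Q$ can annihilate a Chern-class polynomial in classes pulled back from $\bX$. Analyzing this kernel is the main obstacle: one expects the kernel of $\fbar_*$ to be supported on the exceptional loci of the Reichstein modifications while pulled-back classes are insensitive to these exceptional contributions, but making this precise requires careful tracking of how $\mathrm{Pic}(\bX)$ pulls back and extends through each stage of the tower.

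For the bijectivity statement when $\bX$ has only quotient singularities, surjectivity is immediate: every cycle $[\bZ]\in A^*(\bX)_\Q$ is the pushforward $\pi_{st,*}(e_{\iz}[\iz])$ of the strong cycle $\iz:=\pi^{-1}(\bZ)$. For injectivity on all of $A^*_{st}(\ix/\bX)_\Q$, the strategy is to construct a map $\psi\colon A^*(\bX)_\Q\to A^*_{st}(\ix/\bX)_\Q$ sending $[\bZ]\mapsto e_{\iz}[\pi^{-1}(\bZ)]$ and to verify that $\psi\circ\pi_{st,*}=\mathrm{id}$. By Vistoli's theorem \cite{Vis:89}, $\bX$ is the coarse space of a smooth DM stack $\bX'$ with $A^*(\bX)_\Q \cong A^*(\bX')_\Q$; the base change $\ix':=\ix\times_\bX\bX'$ is again properly stable with good moduli space $\bX'$, and the \'etale-local structure of good moduli space morphisms \cite[Theorem 2.9]{AHR:15} shows that $\ix'\to\ix$ is locally a quotient by a finite group action. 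The well-definedness of $\psi$ on rational equivalence can then be transported from the flat pullback on $\bX'$ via this \'etale-local description. The main obstacle is confirming that rational equivalence on $\bX$ lifts in a controlled way to strong cycles on $\ix$ through $\bX'$ and $\ix'$; this should ultimately follow from Vistoli's comparison and a base-change argument.
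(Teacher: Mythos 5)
This statement is one of the paper's main \emph{conjectures}; the paper offers no proof of it, only verifications in specific examples (and Example \ref{ex.quadriccone} showing one cannot take $A^*_{inj}=A^*_{st}$ in general). So there is no argument in the paper to compare yours to, and the question is only whether your sketch actually closes the problem. It does not. For the injectivity half, your choice of $A^*_{inj}$ as the subalgebra generated by $A^1_{st}$ is the natural one, and the projection-formula computation $\pi_*\bigl(P(c_1(\pi^*L_i))\cap[\ix]\bigr)=e_\ix^{-1}P(c_1(L_i))\cap[\bX]$ is correct bookkeeping through the Reichstein tower. But this only reduces injectivity to the implication ``$P(c_1(L_i))\cap[\bX]=0$ in $A_*(\bX)_\Q$ implies $P(c_1(\pi^*L_i))=0$ in $A^*(\ix)_\Q$,'' which is the entire mathematical content of the conjecture on this subring; you explicitly flag this as ``the main obstacle'' and leave it open, so no proof has been given.

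The bijectivity half contains a concretely broken step. The map $\psi([\bZ])=e_{\iz}[\pi^{-1}(\bZ)]$ is not even well defined on graded pieces: for a good moduli space morphism the saturation $\pi^{-1}(\bZ)$ can have components of strictly smaller codimension than $\bZ$ whenever $\bZ$ meets the image of the non-stable locus. This already happens in the paper's first example, where $\bX=\Pro^2$ is smooth (so certainly has quotient singularities) and $\pi^{-1}\bigl((0{:}0{:}1)\bigr)$ contains the divisor $V(x_2)$; thus $\psi$ sends the point class $h^2\in A^2(\bX)$ to something with a codimension-one component, and your surjectivity claim ``$[\bZ]=\pi_{st,*}(e_\iz[\pi^{-1}(\bZ)])$'' fails for such $\bZ$ (in that example surjectivity in degree $2$ is instead witnessed by the stable point $V(x_1,z)$, which is not a fiber of $\pi$). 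The passage to $\ix'=\ix\times_\bX\bX'$ is also problematic: while Alper's base-change results do give that $\ix'\to\bX'$ is a good moduli space morphism, the coarse space map $\bX'\to\bX$ is not flat, so $\ix'$ need not be smooth (or even reduced), and the transport of strong cycles and rational equivalences through $\ix'$ is asserted rather than argued. In short, the proposal is a reasonable research plan whose two key steps — the kernel comparison for pulled-back Chern classes and the construction of an inverse in the quotient-singularity case — are respectively left open and incorrectly executed.
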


\begin{remark}[{Conjectural answer to Question \ref{ques.subring}}]
\label{rmk:answering-ques.subring}
Notice that if Conjecture \ref{conj.Chowsubring} holds
then it provides an answer to Question \ref{ques.subring}: the image of $A^*_{inj}(\ix/\bX)$ under $\pi_{st,*}$ yields a non-trivial subgroup of $A^*(\bX)$ with an intersection product.
\end{remark}

\begin{remark} \label{rmk.strongChow-is-better} We will see in Example
  \ref{ex.quadriccone} that the assignment $[\iz] \mapsto
  e_{\iz}^{-1}[\iz]$ is not injective on {\em all} strong cycles. By
  analogy with the DM case, one might hope that it is possible to
  associate to every scheme $\bX$ with reductive quotient singularities
  (i.e.~those \'etale locally of the form $V/G$ where $V$ is a representation of
  a linearly reductive algebraic group) a {\em canonical} properly
  stable Artin stack $\ix$ whose good moduli space is $\bX$.  If this were
  the case, then assuming Conjecture  \ref{conj.Chowsubring},
 for every such scheme $\bX$, its Chow groups $A^*(\bX)$ are equipped with a canonical subring, namely $A^*_{inj}(\ix/\bX)$.
Moreover, since the stack $\ix$ is canonical the full
strong Chow ring $A^*_{st}(\ix/\bX)$ is also an invariant of the scheme $\bX$.
\end{remark}

\subsection{Examples illustrating the conjectures}

\begin{exam} 

Let $G = \G_m^2$ act on $\A^4$ with weights $\left(\begin{array}{cccc}
1 & 0 & 1 & 1\\
0 & 1 & 0 & 1\end{array}\right)$. We let the coordinate functions on $\A^4$ be $(x_1,x_2,x_3,z)$.  Let $X = \A^4 \smallsetminus
V(x_1x_2, x_2x_3, z)$ and $\ix = [X/G]$. The map 
$X \to \Pro^2$ given by $(x_1, x_2, x_3, z) \mapsto (x_1x_2: x_2x_3 : z)$
is a good quotient.\footnote{This follows because $\A^4 \smallsetminus
V(x_1x_2, x_2x_3, x_4)$ is the $(1,1)$-semi-stable locus for the action of $G$
on $\A^4$ where $(1,1)$ is the character $(s,t) \mapsto st$. }
Hence $\ix\to \Pro^2$ is a good moduli space morphism. The maximal
saturated DM substack is the quotient $[X^s/G]$ and $X^s = X \smallsetminus V(x_1x_2, x_2x_3)$ since $V(x_1x_2, x_2x_3)$ is the saturation of the locus in $V(x_1,x_2, x_3)$ where $G$ acts with positive dimensional stabilizer. We verify all four of the above conjectures for this example.

If we denote by $s,t$ the first Chern classes of the projection characters of $\G_m^2$ then $$A^*(\ix) = A^*_{\G_m^2}(X) =\Z[s,t]/\left(t(s+t), s^2(s+t)\right).$$ We next compute the strong Chow groups of $\ix$. First note that since $\ix$ is non-singular, any Weil divisor is Cartier. Now if $[D]$ is the support of a strong Cartier divisor then $D = V(f)$ where $f$ is a function which is invariant on each $G$-invariant affine open in $X$.  Such a function must necessarily be a homogeneous polynomial in the semi-invariants $(x_1x_2, x_2x_3,z)$ and the Chow class of such a polynomial is a multiple of $s+t$. Moreover, the divisor $V(z)$, which has Chow class $s+t$, is strong. Hence $A^1_{st}(\ix/\bX) \simeq \Z$ generated by $s+t$.

Next, $A^2_{st}(\ix/\bX)$ is generated by $[V(x_1,z)]$
which is the class of a non-stacky closed point in $\ix$; its Chow class is $s
(s+t)$. Since $t(s+t) =0$ in $A^*(\ix)$ we see that $[V(x_1, z)] = [V(z)]^2$.
The relation $(s+t)^3 = 0$ implies $A^*_{st}(\ix/\bX)$ is closed under multiplication, verifying Conjecture \ref{conj.strongChow-ring-structure}. In fact, we have shown $$A^*_{st}(\ix/\bX)=\Z[s+t]/(s+t)^3$$ as rings.

Since $V(z)$ is contained in the stable locus, $\pi_*[V(z)] =[\pi(V(z))]$ by Theorem \ref{thm.pushforward} (\ref{item:stable-locus}).  Now any irreducible strong divisor is of the form $V(f)$ where $f$ is an irreducible homogeneous polynomial of degree $d$ in the
semi-invariants $(x_1x_2, x_2x_3, z)$. Thus $[V(f)] = d [V(z)]$ so
$\pi_*[V(f)] = d h$, where $h$ is the hyperplane class on
$\Pro^2$. On the other hand, $\pi(V(f)) = V(f(A,B,C))$  where
$A,B,C$ are the projective coordinates on the quotient $\Pro^2$.
Thus $[\pi(V(f))] =
d h$ as well. Hence Conjecture \ref{conj.Reichstein-push-forward}
holds for strong divisors.  Moreover, any strong 0-cycle (i.e.~an
element of $A^2_{st}(\ix/\bX)$) is contained in the stable locus so
Conjecture \ref{conj.Reichstein-push-forward} holds for all strong cycles, and Conjecture \ref{conj.strongChow-push-forward} follows as well.

Finally, as shown above, $A^*_{st}(\ix/\bX)=\Z[s+t]/(s+t)^3$ and moreover, $\pi_*\colon A^*_{st}(\ix/\bX)\to A^*(\Pro^2)=\Z[h]/h^3$ sends $s+t$ to $h$. Thus, $\pi_*$ is an isomorphism on $A^*_{st}(\ix/\bX)$, verifying Conjecture \ref{conj.Chowsubring}.
\end{exam}

\begin{remark}[{Generically strong cycles}]
\label{rmk:gen-strong}
Note that $[V(x_1)] + [V(x_2)] = s+t$. Since the image of $V(x_2)$ is
the point $(0:0:1)$ one might expect that $\pi_*[V(x_2)] = 0$ and thus $\pi_*[V(x_1)] = h$ is the hyperplane class. 

In this example, the cycle $V(x_1)$ is not strong, but the ``extra'' component in its saturation,
$V(x_2)$, does not dominate $\pi(V(x_1))$. We call a cycle $\iz \subset \ix$ {\em generically strong} if any extra components of the saturation of $\iz$ do not dominate $\pi(\iz)$.  We conjecture that $\pi_*([\iz]) = \pi(\iz)$ for all such cycles, which would strengthen Theorem \ref{thm.pushforward} (\ref{item:stable-locus}).
\end{remark}

\begin{exam}[{Good quotient with worse than quotient singularities}] \label{ex.quadriccone}
Consider the action of $\G_m^2$ on $\A^5$ with weight matrix
$$\left(\begin{array}{ccccc} 1 & 0 & 1 & 0 & 1\\0 & 1 & 0 & 1 & 1\end{array}
\right)$$ We denote the coordinates as $(x_1, x_2, x_3, x_4, z)$ and
let $X = \A^5 \smallsetminus V(x_1x_2, x_1x_4,x_2 x_3, x_3x_4,v)$.
The good quotient $X/G$ is $\Proj \;k[x_1x_2, x_1x_4, x_2x_3,
x_3x_4,v]$. This is the projective closure in $\Pro^4$ of the cone over the quadric hypersurface in $\Pro^3$; its singularity is not a quotient singularity. We again verify all four of the conjectures for this example modulo an assumption about the structure of $A^2_{st}(\ix/\bX)$.




If we denote by $s,t$ the first Chern classes of the projection
characters of $\G_m^2$ then $$A^*(\ix) = A^*_{\G_m^2}(X) =
\Z[s,t]/\left(s^2(s+t), t^2(s+t)\right).$$ We now (conjecturally)
compute the strong relative Chow groups of $\ix$. First note that since $\ix$
is non-singular, any Weil divisor is Cartier. Now if $[D]$ is the
support of a strong Cartier divisor then $D = V(f)$ where $f$ is a
function which is invariant on each $G$-invariant affine open in $X$.
Such a function must necessarily be a homogeneous polynomial in the
semi-invariants $(x_1x_2, x_1x_4, x_2x_3, x_3 x_4, v)$ and the Chow class of any such polynomial is a multiple of $s+t$. Moreover, the divisor $V(v)$, which has Chow class $s+t$, is strong. Hence $A^1_{st}(\ix/\bX) \simeq\Z$. 

At the other extreme, $A^3_{st}(\ix/\bX)$ is generated by $[V(x_1,x_2, v)]$
which is the class of a non-stacky closed point in $\ix$. Its Chow class is $st(s+t)$.

For $A^2_{st}(\ix/\bX)$ we only have a conjectural description. If we
assume that the strong Chow group is generated by classes of substacks
which are regularly embedded then $A^2_{st}(\ix/\bX)$ is generated by the
equivariant classes of $V(x_1,z)$ and $V(x_2, z)$. (It is easy to
check that these are both strong cycles.)  Since $[V(x_1,z)] = s(s+t)$
and $[V(x_2, z)] = t(s+t)$, we have $A^2_{st}(\ix/\bX) = \Z [V(x_1,z)]  + \Z [V(x_2,z)]$.

If we let $\alpha$, $\beta$, and $\gamma$ be classes of $s+t$, $t(s+t)$, and $s(s+t)$ respectively, then $A^*_{st}(\ix/\bX)$ is closed
under multiplication in $A^*(\ix)$ and is equal to the ring
$$\Z[\alpha, \beta, \gamma]/\left( \alpha^2 -(\beta + \gamma),
  \alpha \beta -\alpha\gamma, \beta\gamma, \beta^2 \right).$$ 
Thus, modulo our assumption that $A^2_{st}(\ix/\bX)$ is generated by regularly embedded substacks we see that Conjecture \ref{conj.strongChow-ring-structure} holds.

We next verify Conjecture \ref{conj.Reichstein-push-forward}, and hence \ref{conj.strongChow-push-forward}. We have already shown that every strong divisor is rationally equivalent to an integer multiple of $V(v)$. Since $V(v)$ misses the stable locus, $\pi_*[V(v)] =[\pi(V(v))]$, so Conjecture \ref{conj.Reichstein-push-forward} holds for all strong divisors. By Remark \ref{rmk:conj-Reichstein-0-cycles}, the conjecture also holds for strong 0-cycles, so we need only consider strong curves. This follows from the observation that if we identify $X/G$ as the projective variety $\Proj\; k[A,B,C,D,V]/(AD - BC)$, then $\pi_*[V(x_1, v)]$ is the class of $A=B=V=0$ and $\pi_*[V(x_2,v)]$ is the class of $A=C=V=0$.

Lastly, we turn to Conjecture \ref{conj.Chowsubring}. Interestingly, $\pi_*$ is \emph{not} injective on strong cycles since the pushforwards of $\beta=[V(x_1, v)]$ and $\gamma=[V(x_2,v)]$ are both equal to the class of a line through the vertex of the cone of $\bX$. This shows that in this example, we cannot take $A_{inj}^*(\ix/\bX)$ equal to $A^*_{st}(\ix/\bX)$, however, we can take it to be the subring generated by $\alpha$. Then $\pi_*$ maps this ring injectively to the subring of $A^*(\bX)$ generated by powers of the hyperplane class.
\end{exam}
\begin{remark}
  This example shows (\emph{cf}.~Remark \ref{rmk.strongChow-is-better}) that
  the strong relative Chow groups $A^*_{st}(\ix/\bX)$, are
  a more refined geometric invariant of the good moduli
  space $\bX$ since the rulings of the quadric embedded in $\bX =
  \Proj k[A,B,C,D,V]/(AD -BC)$ can be distinguished in
  $A^*_{st}(\ix/\bX)$, as $\beta$ and $\gamma$, but not in $A^*_{st}(\bX)$.
\end{remark}

\begin{exam}[{Verifying the conjectures for \cite{EGS:13} example}]
\label{ex:EGS-continued}

We use the notation of Section \ref{sec.egs}. A look at the weight matrix shows that the
action has generically one-dimensional stabilizer along the linear
subspace $V(x,y)$ and two-dimensional stabilizer along the subspace
$V(x,y,z,w)$.  Thus the stable locus 
for the action of $\G_m^3$ is the complement of the saturation of $V(x,y)$
which is the union of the 3 coordinate planes $V(x), V(y), V(z)$; i.e.~the quotient stack $[X^{s}/\G_m^3]$ is the maximal saturated DM
substack of $\ix$. Since the action of $\G_m^3$ on $X^s$ is free, the
good moduli space morphism restricts to an isomorphism on this open
set. If $A,B, C$ are coordinates on $\Pro^2$ corresponding to the
semi-invariant functions $xyz, zw, v$ respectively then $[X^s/\G_m^3]$
can be identified with the open set $\A^2= \Pro^2 \smallsetminus V(A)$.

We have 
$$A^*(\ix)=\Z[s,t,u]/\left(u(s+t+u),s(s+t)(s+t+u), t(s+t)(s+t+u)\right)$$ 
where $s,t,u$ denote the first Chern
classes of the 3 projection characters $\G_m^3 \to \G_m$. 
With this notation the coordinate hyperplanes $x,y,z,w,v$ have Chow
classes $s,t,u, s+t, s+t+u$ respectively corresponding to the weight of
the action on each coordinate.


We next calculate the strong Chow groups. Any strong divisor is given by $V(f)$ where $f$ is a homogeneous polynomial
in the semi-invariant coordinates $(xyz, zw, v)$. This implies that the class of such a divisor is a multiple of $s+t+u$. Since $[V(v)]=s+t+u$, we see $A^1_{st}(\ix/\bX) \simeq \Z$ generated by this class. Next, $A^2_{st}(\ix/\bX)$ is generated by the class of a non-stacky point  $[V(w,v)] = (s+t)(s+t+u)$. Since $u(s+t+u) = 0$ we see that $[V(w,v)] = [V(v)]^2$ in $A^2(\ix)$. It is easy to show $(s+t+u)^3 = 0$ so $[V(v)]^3 = 0$. It follows that $A^*_{st}(\ix/\bX)$ is closed under multiplication and equals the ring $\Z[s+t+u]/(s+t+u)^3$, verifying Conjecture \ref{conj.strongChow-ring-structure}.


Furthermore, since $V(v)$ is smooth and strong, Remark \ref{rmk:conj-for-smooth-strong-substacks} shows $\pi_*[V(v)] = [\pi(V(v))] = h$, the class of a hyperplane on $\Pro^2$. Combined with Remark \ref{rmk:conj-Reichstein-0-cycles}, this shows Conjecture \ref{conj.Reichstein-push-forward} holds for all strong cycles, and hence Conjecture \ref{conj.strongChow-push-forward} holds as well. Finally, since $\pi_*(s+t+u)=h$, notice that $\pi_*\colon A^*_{st}(\ix/\bX)\to A^*(\Pro^2)=\Z[h]/h^3$ is an isomorphism, verifying Conjecture \ref{conj.Chowsubring}.
\end{exam}

\begin{remark}
In Example \ref{ex:EGS-continued}, we were able to verify all four of the conjectures without ever calculating $\pi_*(s)$ and $\pi_*(t)$. We show how one may calculate these quantities, modulo the assumption that $\pi_*[V(z)]= 0$, N.B.~this seems like a reasonable assumption since $V(z)$ is in the saturation of the locus $(0:0:1)$, but it \emph{does not} follow from any of our conjectures. Note that $[V(v)]=[V(x)] + [V(y)] + [V(z)]$ and the automorphism $\ix \to \ix$ which exchanges $x$ and $y$ is a strong regular embedding, so $\pi_*[V(x)] = \pi_*[V(y)]$. On the other hand, $\pi(V(x)) = \pi(V(y)) = V(A)$. Since $\pi_*([V(x)] + [V(y)]) = [V(A)]$ we have $\pi_*[V(x)]= \pi_*[V(y)] = 1/2 [\pi(V(x))] = h/2$. Note that $V(x), V(y), V(z)$ are all contained in the complement of the stable locus.

In codimension 2 we have a similar calculation. The locus $V(w,v)$ consists of a single closed orbit whose image is the point $(1:0:0)$ in $\Pro^2$, so we know that $\pi_*([V(w,v)]) = [\pi(V(w,v))] = h^2$ which is the class of a point in $\Pro^2$. Since $[V(w,v)] = [V(x,v)] + [V(y,v)]$ we know that $\pi_*[V(x,w)] =\pi_*[V(y,w)] = h^2/2$. 
\end{remark}





\def\cprime{$'$} \def\cprime{$'$} \def\cprime{$'$}

\myaddress{Department of Mathematics, University of Missouri, Columbia MO 65211}
\myemail{edidind@missouri.edu}

\myaddress{Pure Mathematics, University of Waterloo, 200 University Avenue West, Waterloo, Ontario, Canada N2L 3G1}
\myemail{msatriano@uwaterloo.ca}

\end{document}